\newtheorem{Theorem}{Theorem}[section]
\newtheorem{Lemma}[Theorem]{Lemma}
\newtheorem{Corollary}[Theorem]{Corollary}
\newtheorem{Proposition}[Theorem]{Proposition}
\newtheorem{Remark}[Theorem]{Remark}
\newtheorem{Example}[Theorem]{Example}
\begin{document}
\title{Resolvability in Hypergraphs}
\author{Imran Javaid$^{1}$, Azeem Haider$^2$, Muhammad Salman$^3$, Sadaf Mehtab$^1$}
%\subjclass{Primary: , Secondary: }
\keywords{metric dimension, partition dimension, hypergraph\\
\indent 2000 {\it Mathematics Subject Classification.} 05C12,
05C65\\
\indent $*$\ Corresponding author: ijavaidbzu@gmail.com\\
\indent This work is the part of the thesis, written by the last author, which was submitted to Bahauddin\\
\indent Zakariya University Multan Pakistan for the fulfillment of M. Phil. degree in 2011.}
%\indent This research of the authors was partially supported by the
%Higher Education Commission of\\
%\indent Pakistan}
\address{1. Center for advanced studies in Pure and Applied Mathematics,
Bahauddin Zakariya University Multan, Pakistan.\newline \indent
E-mail: ijavaidbzu@gmail.com, mishi$_{-}$abi@hotmail.com}
\address{2. Department of Mathematics, Faculty of Science, Jazan University, Jazan, Saudi Arabia. Email: aahaider@jazanu.edu.sa}
\address{3. Department of Mathematics, The Islamia University of Bahawalpur, Punjab, Bahawalpur, 63100, Pakistan. Email: solo33@gmail.com}
\date{}
\maketitle
\begin{abstract}
This article emphasizes an extension of the study of metric
and partition dimension to hypergraphs. We give a sharp lower bounds
for the metric and partition dimension of hypergraphs in general and
give exact values under specified conditions.
\end{abstract}
%=======================================================================
%%%---------------------------------------------------------------------
\section{Introduction}
A {\it hypergraph} $H$ is a pair $(V(H),E(H))$, where $V(H)$ is a
finite non-empty set of vertices and $E(H)$ is a finite family of distinct non-empty
subsets of $V(H)$, called hyperedges, with $\bigcup \limits_{E\in
E(H)} E = V(H)$.  The ``order'' and the
``size'' of $H$ is denoted by $m$ and $k$, respectively. A {\it
subhypergraph} $K$ of a hypergraph $H$ is a hypergraph with vertex
set $V(K)\subseteq V(H)$ and edge set $E(K)\subseteq E(H)$. A hypergraph $H$ is {\it linear} if for distinct hyperedges
$E_i,E_j\in E(H)$, $|E_i\cap E_j| \leq 1$, so for a linear
hypergraph there are no repeated hyperedges of cardinality
greater than one. A hypergraph $H$ such that no hyperedge is a subset of
any other is called {\it Sperner}.\\
A vertex $v\in V(H)$ is {\it incident} with a hyperedge $E$ of $H$
if $v\in E$. If $v$ is incident with exactly $n$ hyperedges, then we
say that the {\it degree} of $v$ is $n$; if all the vertices $v \in
V(H)$ have degree $n$, then $H$ is {\it n-regular}. Similarly, if
there are exactly $n$ vertices incident with a hyperedge $E$, then
we say that the size of $E$ is $n$; if all the hyperedges $E \in
E(H)$ have size $n$, then $H$ is {\it n-uniform}. A graph is simply
a $2$-uniform hypergraph. A hyperedge $E$ of $H$ is called a {\it
pendant hyperedge} if for $E_i,E_j\in E(H)$, $E\cap E_i \neq
\emptyset$ and $E\cap E_j \neq \emptyset$ implies $(E\cap E_i)\cap
(E\cap E_j) \neq \emptyset$. A {\em path} of length $l$ from a vertex $v$ to
another vertex $u$ in a hypergraph is a finite sequence of the
form $v,E_1,w_1,E_2,w_2,...,E_{l-1}, w_{l-1},E_l,u$ such that $v\in E_1,\; w_i\in E_i\cap E_{i+1}\;
\mbox{for}\; i=1,2,...{l-1}$ and $u\in E_l.$ A hypergraph $H$ is
called {\em connected} if there is a path between any two vertices
of $H$. All hypergraphs considered in this paper are connected
Sperner hypergraphs.\\
A hypergraph $H$ is said to be a {\it hyperstar} if  there exists a
subset $C$ of vertices such that $E_i\cap E_j = C \neq \emptyset$,
for any $E_i,E_j \in E(H)$. Then $C$ is called the center of the
hyperstar. If there exists a sequence of hyperedges
$E_1,E_2,\ldots,E_k$ in a hypergraph $H$, then $H$ is said to be
(1)\ a {\it hyperpath} if $E_i\cap E_j \neq \emptyset$ if and only
if $|i-j| = 1$; (2)\ a {\it hypercycle} if, $E_i\cap E_j \neq
\emptyset$ if and only if $i-j\in \{1,-1\}$ (mod $k$).
%A {\it hyperpath} is a hypergraph $H$ with vertex set $V(H) =
%\bigcup\limits_{i=1}^{k}\{v_{1}^{i},v_{2}^{i},...,v_{n_{i}}^{i}\}$
%and an edge set $E(H) =\{E_{1},E_{2},\ldots,E_{k}\}$, where $E_{i}=
%\{v_{1}^{i},v_{2}^{i},...,v_{n_{i}}^{i} \}$ with
%$v_{n_{i}-j+l}=v_{l}^{i+1},\,\ \mbox{for}\,\ 1\leq l\leq j$ when
%$|E_{i}\cap E_{i+1}| = j$ for all $1\leq i\leq k-1$, and $E_{k}=
%\{v_{1}^{k},v_{2}^{k},...,v_{n_{i}}^{k}\}.$ A {\it hypercycle} is a
%hypergraph $H$ with vertex set $V(H)=\bigcup\limits_{i=1}^{k}
%\{v_{1}^{i},v_{2}^{i},...,v_{n_{i}}^{i}\}$ and an edge set
%$E(H)=\{E_{1},E_{2},\ldots,E_{k}\},$ where $E_{i}= \{
%v_{1}^{i},v_{2}^{i},....,v_{n_{i}}^{i}\}$  with
%$v_{n_{i}-j+l}=v_{l}^{i+1},\,\ \mbox{for}\,\ 1\leq l\leq j$ when
%$|E_{i}\cap E_{i+1}| = j$ and $i+1 $ is taken modulo $k$.
A connected hypergraph $H$ with no hypercycle is called a {\it
hypertree}. A subhypertree of a hypertree $H$ with edge set, say
$\{E_{p_1},E_{p_2},\ldots,E_{p_l}\}\subset E(H)$, is called a {\it
branch} of $H$ if $E_{p_1}$ (say) is the only hyperedge such that,
for $E_i,E_j\in E(H)\setminus\{E_{p_1},E_{p_2},\ldots,E_{p_l}\}$,
$E_{p_1}\cap E_i \neq \emptyset$ and $E_{p_1}\cap E_j \neq
\emptyset$ implies $(E_{p_1}\cap E_i)\cap (E_{p_1}\cap E_j) \neq
\emptyset$. The hyperedge $E_{p_1}$ is called the {\it joint} of the
branch.\\
%A hypergraph $H$ with $E(H) =
%\{E_1,\ldots, E_k\}$ is said to be a {\it hyperpath}, if there
%exists a permutation $\pi: \{1,\ldots,k\}\rightarrow \{1,\ldots,k\}$
%such that $(1)$\ $E_{\pi(i)}\cap E_{\pi(i+1)} \neq \emptyset$ for
%any $i = 1,\ldots,k$, $(2)$\ $E_{\pi(i)}\cap E_{\pi(j)} = \emptyset$
%if $j\not\in \{i-1,i,i+1\}$ and $2\leq i\leq k-1$. A hyperpath is
%linear if, for all $i$, $|E_{\pi(i)}\cap E_{\pi(i+1)}| = 1$. A
%hypergraph $H$ with $E(H) = \{E_1,\ldots, E_k\}$ is said to be a
%{\it hypercycle}, if there exists a permutation $\pi:
%\{1,\ldots,k\}\rightarrow \{1,\ldots,k\}$ such that $(1)$\
%$E_{\pi(i)}\cap E_{\pi(i+1\ \mbox{mod}\ k)} \neq \emptyset$ for any
%$i = 1,\ldots,k$, $(2)$\ $E_{\pi(i)}\cap E_{\pi(j)} = \emptyset$ if
%$j\not\in \{i-1\ \mbox{mod}\ k,i,i+1\ \mbox{mod}\ k\}$. A hypercycle
%is linear if, for all $i$, $|E_{\pi(i)}\cap E_{\pi(i+1\ \mbox{mod}\
%k)}| = 1$.
%For any $ a_{1},a_{2},...,a_{l} \in \mathbb{N}$ and $n\geq 3$, we
%denote by $\mathcal{H}(n;a_{1},a_{2},...,a_{l})$, the $n$-uniform
%linear hypergraph consisting of $l$, $n$-uniform linear hyperpaths
%$P_{a_l}$ of length $a_{l}$, where $l\geq 2$, joined in parallel and
%having only two end vertices in common. The hypergraph
%$\mathcal{H}(n;a_{1},a_{2},...,a_{l})$ is called a {\it multi-bridge
%hypergraph} \cite{tom}.
An ordered set $W$ of vertices of a connected graph $G$ is called a
resolving set for $G$ if for every two distinct vertices $u,v \in
V(G)$, there is a vertex $w \in W$ such that $d(u,w) \neq d(v,w)$. A
resolving set of minimum cardinality is called a basis for $G$ and
the number of vertices in a basis is called the metric dimension of
$G$, denoted by $dim(G)$. An ordered $t$-partition $\Pi$ $=$ $\{S_1,
S_2,\ldots,S_t\}$ of $V(G)$ is called a resolving partition if for
every two distinct vertices $u,v \in V(G)$, there is a set $S_i$ in
$\Pi$ such that $d(u,S_i) \neq d(v,S_i)$, where $d(v,s) = \min
\limits_{s \in S}d(u,s)$. The minimum $t$ for which
there is a resolving $t$-partition of $V(G)$ is called the partition
dimension of $G$, denoted by $pd(G)$.
In this article, we consider hypergraphs in the context of metric dimension and partition dimension,
which are defined in Sections 2 and 3, respectively. We give sharp lower bounds for the metric and
partition dimension of graphs. The metric dimension
of some well-known families of hypergraphs such as hyperpaths,
hypertrees and $n$-uniform linear hypercycles is investigated.
Further, we find the metric and partition dimension of $3$-uniform linear hypercycles.
We also characterize all the $n$-uniform (for all $n\geq 2$ and $n\neq
3$ when $k$ is even) linear hypergraphs with partition dimension
$n$. Moreover, all the hypergraphs with metric dimension 1 and
partition dimension 2 are characterized.
%======================================================================
%%%--------------------------New Section-------------------------------
\section{Metric Dimension of Hypergraphs}
%----------------------------------------------------------------------
The metric dimension of a graph was first studied by Slater
\cite{slater} and independently by Harary and Melter \cite{rp3}. It
is a parameter that has appeared in various applications, as diverse
as combinatorial optimization, pharmaceutical chemistry, robot
navigation and sonar. In recent years, a considerable literature has
been developed (see \cite{ahmad,rp2,ftr2,7,ftr1,jav,8,salman}). The
problem of determining whether $dim(H) < M$ ($M>0$), where $H$ is a
simple graph, is an NP-complete problem \cite{5,8}. The metric
dimension of a hypergraph $H$ is defined as follows:\\
The {\em distance} between any two vertices $v$ and $u$ of a hypergraph $H$, $d(v,u)$, is the length of a
shortest path between them and $d(v,u)=0$ if and only if $v=u$. The {\it diameter} of
$H$ is the maximum distance between the vertices of $H$, and is denoted
by $diam(H)$. Two vertices $u$ and $v$ of $H$ are said to be
``diametral'' vertices if $d(u,v) = diam(H)$. The {\em representation},
$r(v|W)$, of a vertex $v$ of $H$ with respect to an ordered set
$W=\{w_1,w_2,...,w_q\}\subseteq V(H)$ is the $q$-tuple
$r(v|W)=\left(d(v,w_1), d(v,w_2),...,d(v,w_q)\right).$ The set $W$
is called a {\em resolving set} for a hypergraph $H$ if $r(v|W)\neq
r(u|W)$ for any two different vertices $v,u\in V(H).$ A resolving
set with minimum cardinality is called a {\em basis} for $H$ and
that minimum cardinality is called the {\em metric dimension} of
$H$, denoted by $dim(H)$.\\
To determine whether a given set $W\subseteq V(H)$ is a resolving
set for a hypergraph $H$, $W$ needs only to be verified for the
vertices in $V(H)\setminus W$ since every vertex $w \in W$ is the
only vertex of $H$ whose distance from $w$ is $0$.\\
If we denote all the vertices of degree $d$ in $E_{i_1}\cap
E_{i_2}\cap...\cap E_{i_d}$ by the class $C{(i_1,i_2,...,i_d)},$
then the collection of all such classes gives a partition of $V(H).$
%A hypergraph $H$ is {\it vertex-simple} if
%every class $C{(i_1,i_2,...,i_d)}$ is either an empty set or a
%singleton subset of $V(H)$.
Thus, we have the following straightforward proposition:
%----------------------Proposition------------------------------
\begin{Proposition}\label{pro1}
For any two distinct vertices $u, v\in C(i_1,i_2,...,i_d)$, we have
$d(u,w)=d(v,w)$ for any $w\in V(H)\setminus\{u,v\}.$
\end{Proposition}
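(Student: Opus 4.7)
The plan is to exploit the fact that membership in the class $C(i_1,i_2,\ldots,i_d)$ records \emph{exactly} the set of hyperedges incident to a vertex. Thus if $u,v\in C(i_1,\ldots,i_d)$, the vertices $u$ and $v$ are incident to precisely the same collection $\{E_{i_1},\ldots,E_{i_d}\}$ of hyperedges and no others. The strategy is to show that any shortest $w$-$u$ walk can be converted, by altering only its final endpoint, into a $w$-$v$ walk of equal length, and vice versa.

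First I would fix an arbitrary $w\in V(H)\setminus\{u,v\}$ and let $d(w,u)=\ell$. Choose a shortest $w$-$u$ path
\[
w,\;E'_1,\;w_1,\;E'_2,\;w_2,\ldots,E'_{\ell-1},\;w_{\ell-1},\;E'_\ell,\;u.
\]
Because $u$ lies only in the hyperedges $E_{i_1},\ldots,E_{i_d}$, the terminal hyperedge $E'_\ell$ must coincide with some $E_{i_j}$. Since $v$ is incident to every $E_{i_j}$, it lies in $E'_\ell$ as well. Replacing $u$ by $v$ at the end of the sequence yields
\[
w,\;E'_1,\;w_1,\ldots,E'_{\ell-1},\;w_{\ell-1},\;E'_\ell,\;v,
\]
which is a valid $w$-$v$ path of length $\ell$; this forces $d(w,v)\le d(w,u)$. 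The symmetric argument, starting from a shortest $w$-$v$ path, gives $d(w,u)\le d(w,v)$, whence equality.

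I expect no real obstacle here; the one small technical point to verify is that the swap genuinely produces a \emph{path} rather than merely a walk, but since we only modify the last vertex (without revisiting any earlier $w_i$, as $w\notin\{u,v\}$ and the original path was already a path with distinct vertices), this is automatic. The exclusion $w\notin\{u,v\}$ is essential: otherwise the left-hand side would be $0$ while the right-hand side would be positive, breaking the conclusion.
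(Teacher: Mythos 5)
Your argument is correct and is exactly the justification the paper has in mind: it states this proposition as ``straightforward'' and omits the proof, the key point being that a degree-$d$ vertex of $C(i_1,\ldots,i_d)$ lies in precisely the hyperedges $E_{i_1},\ldots,E_{i_d}$, so the terminal hyperedge of any shortest $w$--$u$ path also contains $v$, giving $d(w,v)\le d(w,u)$ and equality by symmetry. Your closing technical worry is even milder than you suggest, since the paper's notion of ``path'' in a hypergraph does not require distinct vertices, and in any case the two symmetric inequalities already force equality regardless of whether $v$ happens to appear as an intermediate vertex.
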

Thus, we extract the following Lemma related to the resolving set for
$H$:
\begin{Lemma}\label{l1}
If $u,v\in C(i_1,i_2,...,i_d)$ and $W\subseteq V(H)$ resolves $H$,
then at least one of the vertices $u$ and $v$ is in $W$. Moreover,
if $u\in W$ and $v\not\in W$, then $(W\setminus\{u\})\cup \{v\}$
also resolves $H$.
\end{Lemma}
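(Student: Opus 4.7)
The plan is to derive both statements directly from Proposition~\ref{pro1}, which says that two vertices in the same class $C(i_1,\ldots,i_d)$ are metrically indistinguishable from every third vertex. In spirit, $u$ and $v$ behave as metric twins, so the only way a resolving set can tell them apart is to contain at least one of them, and swapping the chosen twin should be harmless.

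For the first assertion, I would argue by contradiction. If neither $u$ nor $v$ lay in $W$, then every $w\in W$ would satisfy $w\notin\{u,v\}$, and Proposition~\ref{pro1} would give $d(u,w)=d(v,w)$ for each such $w$. This forces $r(u|W)=r(v|W)$, contradicting the hypothesis that $W$ resolves $H$. Hence at least one of $u,v$ must lie in $W$.

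For the second assertion, I would write $W'=(W\setminus\{u\})\cup\{v\}$ and introduce the transposition $\phi:V(H)\to V(H)$ that swaps $u$ with $v$ and fixes every other vertex. Proposition~\ref{pro1} is equivalent to the statement that $\phi$ is an isometry of $H$: distances internal to $V(H)\setminus\{u,v\}$ are preserved by definition, distances of the form $d(u,x)$ with $x\notin\{u,v\}$ equal $d(v,x)$ by Proposition~\ref{pro1}, and $d(u,v)$ is trivially fixed. Since $\phi$ is a bijective isometry and $W'=\phi(W)$, for any two distinct $x,y\in V(H)$ the pair $\phi(x),\phi(y)$ is distinct and is resolved by some $w\in W$; applying the isometry then shows that $\phi(w)\in W'$ resolves $x,y$.

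The step I would verify most carefully is that $\phi$ is genuinely an isometry; once this is in hand, everything else is formal. A purely elementary case analysis is also available, splitting on whether $\{x,y\}$ meets $\{u,v\}$, with the only delicate case being $x=u$ and $y\notin\{u,v\}$. Here the natural $W$-distinguisher for the pair $(u,y)$ could be $u$ itself, which no longer belongs to $W'$; the remedy is to apply the resolving property of $W$ to the pair $(v,y)$ instead and use Proposition~\ref{pro1} to translate any such distinguisher into a $W'$-distinguisher for $(u,y)$.
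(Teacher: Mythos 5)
Your proposal is correct, and it follows the same route the paper intends: the paper states Lemma~\ref{l1} without a written proof, presenting it as an immediate consequence of Proposition~\ref{pro1}, and your argument supplies exactly the details being left to the reader (the contradiction for the first assertion, and the twin-swap for the second). Your packaging of the second assertion via the transposition $\phi$ being an isometry is a clean and valid way to organize the exchange argument, but it is not a genuinely different method.
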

Let us denote $n(i_1,i_2,...,i_d)=|C{(i_1,i_2,...,i_d)}|-1$ when
$C{(i_1,i_2,...,i_d)}\neq \emptyset$, otherwise we take
$n(i_1,i_2,...,i_d)=0.$ This notation helps us to write a
lower bound for the metric dimension of hypergraphs in the following Proposition.
%---------------------Proposition 2-------------------------------------------------
\begin{Proposition}\label{p1}
For any hypergraph $H$ with $k$ hyperedges, $$dim(H)\geq
\sum\limits_{j=1}^{k}\sum\limits_{i_1<..<i_j}^{k}n(i_1,i_2,...,i_j).$$
\end{Proposition}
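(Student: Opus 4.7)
The plan is to leverage Proposition \ref{pro1} and Lemma \ref{l1} to show that a resolving set must contain almost all vertices in every non-empty class $C(i_1,\ldots,i_d)$, and then to add up these minimum contributions across the partition of $V(H)$ induced by the classes.

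First I would fix an arbitrary resolving set $W\subseteq V(H)$ and consider an arbitrary non-empty class $C(i_1,\ldots,i_d)$. By Proposition \ref{pro1}, any two distinct vertices $u,v\in C(i_1,\ldots,i_d)$ satisfy $d(u,w)=d(v,w)$ for every $w\in V(H)\setminus\{u,v\}$. Consequently, if both $u$ and $v$ lie outside $W$, then their representations with respect to $W$ coincide, contradicting that $W$ resolves $H$. Hence at most one vertex of $C(i_1,\ldots,i_d)$ may lie outside $W$, so
\[
|W\cap C(i_1,\ldots,i_d)|\;\geq\;|C(i_1,\ldots,i_d)|-1\;=\;n(i_1,\ldots,i_d).
\]
This inequality also holds trivially for empty classes, by the convention $n(i_1,\ldots,i_d)=0$.

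Next, I would invoke the observation made just before Proposition \ref{pro1}: the collection of all classes $C(i_1,\ldots,i_d)$ over all choices of $1\leq d\leq k$ and $1\leq i_1<i_2<\cdots<i_d\leq k$ forms a partition of $V(H)$. Summing the previous inequality over this partition yields
\[
|W|\;=\;\sum_{j=1}^{k}\sum_{i_1<\cdots<i_j}|W\cap C(i_1,\ldots,i_j)|\;\geq\;\sum_{j=1}^{k}\sum_{i_1<\cdots<i_j}n(i_1,\ldots,i_j).
\]
Taking $W$ to be a basis of $H$ gives $dim(H)=|W|$ on the left-hand side, and the desired lower bound follows.

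I do not expect any real obstacle: the two preliminary statements do all the combinatorial work, and the proof amounts to a double-counting argument over the partition of $V(H)$ into twin classes. The only mild subtlety is to confirm that the classes genuinely partition $V(H)$ (every vertex lies in some non-empty $C(i_1,\ldots,i_d)$ corresponding to the exact set of hyperedges containing it) so that the sum over classes equals $|W|$ rather than merely bounding it below.
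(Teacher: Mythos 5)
Your proof is correct and follows essentially the same route as the paper: the paper's one-line argument applies Lemma \ref{l1} to conclude that each class $C(i_1,\ldots,i_d)$ contributes at least $n(i_1,\ldots,i_d)$ vertices to any basis and then sums over the classes, which is exactly your double-counting over the twin-class partition (you merely re-derive the content of Lemma \ref{l1} directly from Proposition \ref{pro1}). No discrepancy worth noting.
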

\begin{proof}
It follows from the fact that if there are $|C(i_1,i_2,...,i_d)|$
number of vertices of degree $d$ in $E_{i_1}\cap E_{i_2}\cap...\cap
E_{i_d},$ then, by Lemma \ref{l1}, at least $n(i_1,i_2,...,i_d)$
vertices should belong to any basis $W$.
\end{proof}
%-------------------Remark-----------------------------------------------------------
\begin{Remark}\label{r1}
By Proposition \ref{p1}, it is clear that, in order to obtain a
basis of any hypergraph $H$, it suffices to consider only one
vertex, say $v_{i_1,i_2,...,i_d}$, from each class
$C(i_1,i_2,...,i_d)$ if $C(i_1,i_2,...,i_d)\neq \emptyset$. We call
this vertex, a representative vertex of the class $C(i_1,i_2,...,i_d).$ We
denote the set of all representative vertices in a hypergraph $H$ by
$R(H)$, and hence we always have, $V(H)\setminus R(H)\subseteq W$ for
any basis $W$ of $H$.
\end{Remark}
Now we discuss some classes of hypergraphs for which the equality holds in
the Proposition \ref{p1}.
%---------------------Theorem 1------------------------------------------------------
\begin{Theorem}\label{t1}
For any hypergraph $H$ with $k$ hyperedges, if $n(i)\neq 0$ for all
$E_i\in E(H),$ then $dim(H)=
\sum\limits_{j=1}^{k}\sum\limits_{i_1<..<i_j}^{k}n(i_1,i_2,...,i_j).$
Moreover, there are
$\prod\limits_{j=1}^{k}\prod\limits_{i_1<..<i_j}^{k} (n(i_1,i_2,\\
...,i_j)+1)$ basis for $H$.
\end{Theorem}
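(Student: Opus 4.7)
The lower bound is already furnished by Proposition~\ref{p1}, so my task is twofold: produce an explicit resolving set that attains the bound, and then count all such bases.

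For the matching upper bound, I would take the candidate $W := V(H)\setminus R(H)$, where $R(H)$ is any choice of representative vertices as in Remark~\ref{r1}. By construction $|W| = \sum_{j,\, i_1<\cdots<i_j}(|C(i_1,\ldots,i_j)|-1) = \sum n(i_1,\ldots,i_j)$, which matches the lower bound. It remains to verify that $W$ resolves $H$, which I split into two cases. If $u$ and $v$ lie in the same class $C(i_1,\ldots,i_d)$, then at least one of them, say $u$, is a non-representative and hence in $W$; it is its own resolver since $d(u,u)=0<d(v,u)$. If $u$ and $v$ lie in different classes, then their incident-hyperedge sets differ, so there exists $\ell$ with (WLOG) $u\in E_\ell$ and $v\notin E_\ell$.

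The crux is this second case, and it is exactly where the hypothesis $n(\ell)\neq 0$ for every $\ell$ enters. Because $n(\ell)\geq 1$, the class $C(\ell)$ of degree-one vertices incident only to $E_\ell$ has at least two elements, so there is some non-representative $w\in C(\ell)\cap W$. Since $w$ lies only in $E_\ell$, every path from $v$ to $w$ in the sense defined in the introduction must have its final hyperedge equal to $E_\ell$; but $v\notin E_\ell$ rules out a path of length $1$, so $d(v,w)\geq 2$, whereas $u\in E_\ell$ gives $d(u,w)=1$ (note $u\neq w$ because $u\in R(H)$ and $w\notin R(H)$). Thus $w$ resolves $u$ and $v$, so $W$ is a resolving set of the desired size and $\dim(H)$ equals the stated sum. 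I expect this cross-class resolving step to be the only real obstacle; everything else is bookkeeping.

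Finally, for the count, I would invoke Lemma~\ref{l1}: any basis $W'$ contains at least $n(i_1,\ldots,i_d)$ vertices from each class $C(i_1,\ldots,i_d)$. Since the total of these lower bounds already equals $\dim(H)$, each basis must contain \emph{exactly} $|C(i_1,\ldots,i_d)|-1$ vertices from each class, i.e.\ it omits exactly one vertex per class. Conversely, the resolving argument above never used a particular choice of representatives, so every such selection does give a basis. The number of bases is therefore $\prod_{j,\, i_1<\cdots<i_j}|C(i_1,\ldots,i_j)| = \prod(n(i_1,\ldots,i_j)+1)$, as claimed.
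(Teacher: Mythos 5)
Your proposal is correct and takes essentially the same route as the paper: set $W=V(H)\setminus R(H)$, resolve two vertices lying in different classes by a degree-one vertex of a hyperedge containing exactly one of them (which exists because $n(\ell)\neq 0$), and count the bases via Lemma~\ref{l1}. You merely spell out details the paper leaves implicit, such as the justification that $d(v,w)\geq 2$ and the ``exactly one omitted vertex per class'' counting argument.
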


\begin{proof}
Consider $W=V(H)\setminus R(H),$ we have to show that $W$ is a basis
for $H$. Take any two different vertices $v,v'\in R(H)$. Since both
the vertices $v$ and $v'$ are representative vertices of different
classes, there exists a hyperedge $E_j$ such that $v'\in E_j$ and
$v\not\in E_j.$ It follows from $n(j)\neq 0$ that there exists a
vertex of degree one $w_j \in V(H)$ such that $w_j\in E_j\cap W$.
Clearly, $d(v',w_j)=1$ and $d(v,w_j)\neq 1,$ hence $W$ is a basis
for $H$. Further, by Lemma \ref{l1}, there are
$\prod\limits_{j=1}^{k}\prod\limits_{i_1<..<i_j}^{k}
(n(i_1,i_2,...,i_j)+1)$ such $W$.
\end{proof}

For all $n\geq 4$, if $H$ is an $n$-uniform linear hypergraph with
$k$ hyperedges, then $n(i,i+1) = 0$ for every $i\in \{1,2,\ldots,k\}$. Thus, we have the
following corollary:

\begin{Corollary}\label{c11}
For $n\geq 4$, let $H$ be an $n$-uniform linear hypergraph with $k$
hyperedges. If $n(i)\neq 0$ for all $E_i\in E(H),$ then $dim(H) =
\sum \limits_{i = 1}^k n(i)$.
\end{Corollary}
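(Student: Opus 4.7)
The plan is to invoke Theorem \ref{t1} directly and observe that in the double sum $\sum_{j=1}^{k}\sum_{i_1<\ldots<i_j}^{k} n(i_1,i_2,\ldots,i_j)$, every contribution coming from $j\geq 2$ is zero, so only the $j=1$ layer $\sum_{i=1}^{k} n(i)$ survives. Since the hypothesis $n(i)\neq 0$ for all $E_i\in E(H)$ is exactly the hypothesis of Theorem \ref{t1}, that theorem's equality applies, and the corollary reduces to verifying the vanishing $n(i_1,i_2,\ldots,i_j)=0$ for every $j\geq 2$.

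The decisive step is to exploit linearity. Because $H$ is linear, any two distinct hyperedges $E_a,E_b$ satisfy $|E_a\cap E_b|\leq 1$, so for any $j\geq 2$ and any choice of indices $i_1<i_2<\ldots<i_j$,
\[
E_{i_1}\cap E_{i_2}\cap\cdots\cap E_{i_j}\;\subseteq\; E_{i_1}\cap E_{i_2},
\]
whence $|C(i_1,i_2,\ldots,i_j)|\leq 1$. By the convention fixed in the paper, $n(i_1,\ldots,i_j)=0$ when the class is empty and $n(i_1,\ldots,i_j)=|C(i_1,\ldots,i_j)|-1$ otherwise; in either situation the value is $0$ as soon as $j\geq 2$.

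Substituting these vanishings into the formula supplied by Theorem \ref{t1} leaves $dim(H)=\sum_{i=1}^{k} n(i)$, which is the claim. I do not anticipate a real obstacle here: the argument is a one-line consequence of linearity followed by an appeal to Theorem \ref{t1}. The only point worth flagging is that the hypothesis $n\geq 4$ is stronger than what the vanishing argument literally requires, since linearity alone annihilates all higher-order $n(i_1,\ldots,i_j)$; it is retained to match the sentence preceding the corollary and to remain consistent with later results where small values of $n$ are treated separately.
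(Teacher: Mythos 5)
Your proof is correct and follows the same route the paper takes: the corollary is obtained from Theorem \ref{t1} by observing that linearity forces every class $C(i_1,\ldots,i_j)$ with $j\geq 2$ to have at most one vertex, so all higher-order terms $n(i_1,\ldots,i_j)$ vanish and only $\sum_{i=1}^k n(i)$ survives. If anything, your argument is slightly more complete than the paper's one-line remark (which only mentions $n(i,i+1)=0$), and your observation that the hypothesis $n\geq 4$ is not needed for the vanishing step itself is accurate.
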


We give two examples which show that the condition in Theorem
\ref{t1} cannot be relaxed generally.

\begin{Example}\label{e1}
Let $H$ be a hypergraph with vertex set $V(H)=\{v_1,v_2,v_3,v_4\}$
and edge set $E(H)=\{E_1, E_2\}$, where $E_1=\{v_1,v_2,v_3\}$ and
$E_2=\{v_3,v_4\}$. Clearly, $n(2)=0$ so $H$ does not satisfy the
condition of Theorem \ref{t1}. Without loss of generality, we can
take the set of representative vertices $R(H)=\{v_1,v_3,v_4\}$, and
hence $W=V(H)\setminus R(H) = \{v_2\}$. But, $W$ is not a resolving
set for $H$ since $r(v_1|W)=r(v_3|W).$ In fact, $dim(H)=2>1.$
\end{Example}

%Now, we give an example which shows the, if $n(i) = 0$ and $n(i,i+1)
%\neq 0$, then still the result in Theorem \ref{t1} does not work.

\begin{Example}\label{e11}
Let $H$ be a hypergraph with vertex set
$V(H)=\{v_1,v_2,v_3,v_4,v_5,v_6\}$ and edge set
$E(H)=\{E_1,E_2,E_3\}$, where
$E_1=\{v_1,v_2,v_3,v_4\},E_2=\{v_3,v_4,v_5,v_6\}$ and
$E_3=\{v_1,v_2,v_5,v_6\}$. Clearly, $n(i) = 0$ for all $i = 1,2,3$
and $n(1,2)= n(2,3) = n(3,1) \neq 0$. Without loss of generality, we
can take the set of representative vertices $R(H)=\{v_1,v_3,v_5\}$,
and hence $W=V(H)\setminus R(H) = \{v_2,v_4,v_6\}$. But, $W$ is not
a resolving set for $H$ since $r(v_1|W)=r(v_3|W) = r(v_5|W).$ In
fact, $dim(H)=5>3.$
\end{Example}

However, the condition in Theorem \ref{t1} can be reduced in some
special cases as shown in the following results.

\begin{Theorem}\label{l3}
Let $H$ be a hyperpath with $k$ hyperedges $E_1,E_2,\ldots E_k$ in a
canonical way. Then
$dim(H)=\sum\limits_{i=1}^{k}n(i)+\sum\limits_{i=1}^{k-1}n(i,i+1)$
if both $n(1)$ and $n(k)$ are non-zero.
\end{Theorem}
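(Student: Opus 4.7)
The plan is to prove matching lower and upper bounds. For the lower bound, I would invoke Proposition \ref{p1} and observe that in a hyperpath the only non-empty intersection classes are $C(i)$ for $1\le i\le k$ and $C(i,i+1)$ for $1\le i\le k-1$, since $E_i\cap E_j=\emptyset$ whenever $|i-j|\ge 2$. Consequently the double sum in Proposition \ref{p1} collapses to $\sum_{i=1}^{k} n(i) + \sum_{i=1}^{k-1} n(i,i+1)$.

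For the upper bound, I would take $W=V(H)\setminus R(H)$; by Remark \ref{r1} this set has exactly the claimed cardinality, and it remains to show that $W$ resolves $H$. Using the hypothesis $n(1)\neq 0$ and $n(k)\neq 0$, I can fix two ``landmarks'' $w_1\in C(1)\cap W$ and $w_k\in C(k)\cap W$, and then argue that these two vertices already separate every pair of representatives.

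The central computation is a distance formula in the hyperpath: if $v$ lies in $C(a)$ or in $C(a,a+1)$, so that the set of indices of hyperedges containing $v$ is an interval $[a,b]$ with $b\in\{a,a+1\}$, then $d(v,w_1)=a$ and $d(v,w_k)=k-b+1$. This is because in a hyperpath any path between two vertices must use hyperedges whose indices form a consecutive block (since $E_i\cap E_j\neq\emptyset$ only when $|i-j|=1$), so a shortest path from $v$ to $w_1$ necessarily traverses $E_a,E_{a-1},\ldots,E_1$ (with the $a=1$ case handled by the shared hyperedge $E_1$), and symmetrically for $w_k$. In particular, the pair $(d(v,w_1),d(v,w_k))=(a,k-b+1)$ recovers $(a,b)$, and hence the class containing $v$.

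It follows that any two distinct representatives, which by construction belong to distinct classes and therefore have distinct index-spans, are separated by $w_1$ or $w_k$; and every vertex of $W$ is trivially separated from all other vertices by itself. Hence $W$ is a resolving set, matching the lower bound. The main obstacle is a clean justification of the distance formula: one must verify that in a hyperpath shortest paths use hyperedges in strictly monotone order of index, so that distances are dictated precisely by the endpoints of the spans, and in particular that the pair $(a,k-b+1)$ is an injective function of the class.
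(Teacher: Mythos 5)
Your proposal is correct and follows essentially the same route as the paper: take $W=V(H)\setminus R(H)$, use the hypothesis to pick degree-one landmarks $w_1\in E_1\cap W$ and $w_k\in E_k\cap W$, and check that the distance pairs $(j,k-j+1)$ for $C(j)$ and $(j,k-j)$ for $C(j,j+1)$ are pairwise distinct. Your write-up is in fact slightly more complete, since you make explicit the collapse of the lower-bound sum and the monotonicity argument justifying the distance formula, both of which the paper leaves implicit.
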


\begin{proof}
Let $W = V(H)\setminus R(H)$. Then it follows from the facts
$n(1)\neq 0$ and $n(k)\neq 0$ that there exists a vertex of degree
one $w_1\in E_1\cap W$ and there exists a vertex of degree one
$w_k\in E_k\cap W.$  In order to prove the theorem, we only have to
show that the representative vertices are resolved by the set $W$,
and it yields from the fact that for any $1\leq j\leq k$, we have
$\left(d(v_j,w_1), d(v_j,w_k)\right)=(j,k-j+1)$, and for any $1\leq
j<k-1$, we have $\left(d(v_{j,j+1},
w_1),d(v_{j,j+1},w_k)\right)=(j,k-j).$
\end{proof}

\begin{Theorem}\label{t2}
Let $H$ be a hypertree with $k$ hyperedges and let $E_{p1},
E_{p2},\dots,E_{pt}$ be its pendant hyperedges. Then
$dim(H)=\sum\limits_{j=1}^{k}\sum\limits_{i_1<..<i_j}^{k}n(i_1,i_2,...,i_j)$
if $n(ps)\neq 0$ for all $s=1,2,\ldots,t.$
\end{Theorem}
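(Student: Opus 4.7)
The plan is to imitate the hyperpath argument of Theorem~\ref{l3}, but with landmarks placed at every pendant hyperedge rather than only at the two endpoints. By Proposition~\ref{p1} the right-hand side is already a lower bound for $dim(H)$, so the task reduces to exhibiting a resolving set of exactly that size. Following Remark~\ref{r1}, the natural candidate is $W=V(H)\setminus R(H)$; moreover, because $n(p_s)\neq 0$ for each pendant $E_{p_s}$, we may fix, for every $s=1,2,\ldots,t$, a degree-one vertex $w_{p_s}\in E_{p_s}\cap W$ to serve as a \emph{pendant landmark}.

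By Lemma~\ref{l1} any pair of vertices not both lying in $R(H)$ is already resolved by $W$, so the remaining task is to separate pairs of distinct representatives $v\in C(i_1,\ldots,i_d)$ and $v'\in C(j_1,\ldots,j_{d'})$. I plan to show that some pendant landmark $w_{p_s}$ already separates every such pair. Since the two classes differ, the symmetric difference of their index sets is non-empty, so one may pick a hyperedge $E_q$ with, say, $v\in E_q$ and $v'\notin E_q$. If $E_q$ is itself pendant, then $d(v,w_q)=1<d(v',w_q)$ and we are done.

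The substantive case is when $E_q$ is internal. Here I would use the structure of a hypertree: removing $E_q$ disconnects $H$ into branches (in the sense of the definition given in the introduction), and at least one branch not containing $v'$ must contain a pendant $E_{p_s}$. Every shortest path from $v'$ to $w_{p_s}$ must pass through $E_q$, whereas $v$, already lying in $E_q$, can reach $w_{p_s}$ by a path that never leaves the $E_{p_s}$-side of $E_q$. A direct distance count then gives $d(v,w_{p_s})<d(v',w_{p_s})$, separating $v$ from $v'$.

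The main obstacle is this internal case; its legitimacy rests on a short structural lemma for hypertrees asserting that deleting any internal hyperedge $E_q$ produces a family of branches each containing at least one pendant of $H$, and that the shortest-path distance from a vertex in one branch to a landmark in another is \emph{additive through} $E_q$, in the sense that any shortest path is forced to enter $E_q$ exactly once. Once that lemma is in hand the distance comparison above is immediate, and the counting identity $dim(H)=|W|=\sum_{j=1}^{k}\sum_{i_1<\cdots<i_j}^{k}n(i_1,\ldots,i_j)$ follows at once from $|W|=|V(H)|-|R(H)|$ together with the definition of $n(i_1,\ldots,i_d)$.
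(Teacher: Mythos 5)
Your proposal is correct and matches the paper's proof in all essentials: the same candidate set $W=V(H)\setminus R(H)$, the same reduction (via Lemma \ref{l1} and Remark \ref{r1}) to separating pairs of representative vertices, and the same use of a degree-one landmark in a pendant hyperedge, whose existence is exactly what the hypothesis $n(p_s)\neq 0$ guarantees. The only divergence is in the mechanics of the final distance comparison: you handle an internal separating hyperedge by a branch-decomposition/monotonicity argument (with an honestly flagged but unproved structural lemma), whereas the paper embeds $v$ and $E_j$ in a hyperpath running between two pendant hyperedges and reuses the computation of Theorem \ref{l3}; the structural facts about hypertrees left implicit are of comparable weight in both versions.
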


\begin{proof}
Consider $W=V(H)\setminus R(H),$ similarly as in the proof of
Theorem \ref{t1}, again we have to show that $W$ is a basis for $H$.
Take any two different vertices $v,v'\in R(H)$, then both vertices
are representative of two different classes, and hence there exists
a hyperedge $E_j$ such that $v'\in E_j$ but $v\not\in E_j.$ Now,
consider a hyperpath contained in the hypertree $H$ which starts and
ends at the pendant hyperedges and contains both $v$ and $E_j.$ By
using the proof of Theorem \ref{l3}, it can be seen that the
vertices $v$ and $v'$ has different representations with respect to
$W$, which proves the theorem.
\end{proof}

An $n$-uniform linear hyperstar $(n \geq 3)$ is a special case of hypertree in which $n(i)\neq 0$ for all $E_i\in E(H)$, so we
have the following corollary:

\begin{Corollary}\label{c22}
For $n\geq 3$, let $H$ be an $n$-uniform linear hyperstar with $k\
(\geq 3)$ hyperedges. Then $dim(H)= k(n-2)$.
\end{Corollary}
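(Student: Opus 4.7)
The plan is to read off the structure of an $n$-uniform linear hyperstar with $k \geq 3$ hyperedges, compute all the $n(i_1,\dots,i_j)$ values, check that the hypothesis of Theorem \ref{t2} is satisfied, and then plug into that formula.

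First I would describe the vertex anatomy. Since $H$ is a hyperstar, all hyperedges share a common center $C = E_i \cap E_j$; since $H$ is linear, $|C| \leq 1$; and since $H$ is connected with $k \geq 3$, $C$ cannot be empty, so $C = \{c\}$ consists of a single vertex of degree $k$. Each hyperedge $E_i$ then contributes exactly $n - 1$ vertices other than $c$, and by linearity each such vertex lies in $E_i$ alone, so has degree $1$. Hence $|V(H)| = k(n-1) + 1$, the class $C(i)$ consists of the $n - 1$ degree-one vertices in $E_i$, the class $C(1,2,\ldots,k)$ is $\{c\}$, and all other classes $C(i_1,\ldots,i_j)$ with $2 \leq j < k$ are empty by linearity.

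Next I would read off the $n$-values: $n(i) = (n-1) - 1 = n - 2$ for each $i$, while $n(1,2,\ldots,k) = 1 - 1 = 0$ and $n(i_1,\ldots,i_j) = 0$ for all remaining tuples. Since $n \geq 3$, each $n(i) \neq 0$. I would then verify that every hyperedge $E_i$ is pendant in the sense of the paper: for any $E_a, E_b \in E(H)$, we have $E_i \cap E_a = \{c\} = E_i \cap E_b$, so $(E_i \cap E_a) \cap (E_i \cap E_b) = \{c\} \neq \emptyset$. Thus every hyperedge is a pendant hyperedge, and in particular the set $\{E_{p_1},\ldots,E_{p_t}\}$ of pendant hyperedges satisfies $n(p_s) = n-2 \neq 0$, so Theorem \ref{t2} applies.

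Finally I would substitute into the formula of Theorem \ref{t2}. Because all $n$-values vanish except the $n(i)$, the double sum collapses to
\[
dim(H) \;=\; \sum_{j=1}^{k}\sum_{i_1<\cdots<i_j}^{k} n(i_1,\ldots,i_j) \;=\; \sum_{i=1}^{k} n(i) \;=\; k(n-2),
\]
which is the desired equality. The only mildly delicate step is confirming that the connectivity and Sperner hypotheses force a single-vertex center (so that no $n(i_1,\ldots,i_j)$ with $1 < j < k$ sneaks in a positive contribution); everything else is bookkeeping on top of Theorem \ref{t2}.
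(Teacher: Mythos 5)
Your proposal is correct and follows essentially the same route as the paper, which justifies the corollary in one line by observing that a linear hyperstar is a hypertree (indeed one in which every hyperedge is pendant and $n(i)=n-2\neq 0$), so Theorem \ref{t2} (equivalently Theorem \ref{t1}) applies and the double sum reduces to $\sum_{i=1}^k n(i)=k(n-2)$. Your additional bookkeeping — that linearity forces a one-vertex center, so $n(1,2,\ldots,k)=0$ and all intermediate classes are empty — is exactly the verification the paper leaves implicit.
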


Consider an $n$-uniform linear hypercycle $\mathcal{C}_{k,n}$ with
$k$ hyperedges. When $n\geq 4$, then $n(i)\neq 0$ for all $E_i\in
E(\mathcal{C}_{k,n})$ so, by Corollary \ref{c11},
$dim(\mathcal{C}_{k,n}) = k(n-3)$.

%\begin{Theorem}\label{tt2}
%Let $\mathcal{C}_{k,n}$ be an $n$-uniform hypercycle with $k$
%hyperedges and $n\geq 4$. Then
% $dim(\mathcal{C}_{k,n})=\sum\limits_{i=1}^{k}n(i)
%+\sum\limits_{i=1}^{k-1}n(i,i+1)=\sum\limits_{i=1}^{k}n(i)=k(n-3).$
%\end{Theorem}

For the case $n=3$, we have $n(i) = 0$ for all $E_i\in E(H)$, hence
the lower bound given in Proposition \ref{p1} is zero and every
vertex in $\mathcal{C}_{k,3}$ is the representative vertex. We
discuss this case in the following result:

\begin{Theorem}\label{t3}
Let $\mathcal{C}_{k,3}$ be a $3$-uniform linear hypercycle with $k$
hyperedges. Then $dim\left(\mathcal{C}_{3,3}\right)$ $=2$ and for
all $k\geq 4$,
$$ dim\left(\mathcal{C}_{k,3}\right) = \left\{
            \begin{array}{ll}
             2,          \,\,\,\     & \mbox{if}\,\,\,\ k \mbox{ is even},\\
             3,          \,\,\,\     & \mbox{if}\,\,\,\ k \mbox{ is odd}.
            \end{array}
             \right.
$$
\end{Theorem}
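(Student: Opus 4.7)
The plan is to treat the three subcases $k=3$, even $k\geq 4$, and odd $k\geq 5$ separately, throughout exploiting the formula
\[
d(u,w)=\min\{d_k(i,a):\, i\in S(u),\, a\in S(w)\}+1
\]
valid for distinct vertices $u,w$, where $S(m_j)=\{j\}$, $S(v_j)=\{j,j+1\}$, and $d_k$ denotes cyclic distance on the edge cycle. This immediately yields the useful identity $d(v_j,w)=\min\{d(m_j,w),d(m_{j+1},w)\}$ whenever $w\notin\{v_j,m_j,m_{j+1}\}$, which will be the workhorse of the lower bound.

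For $k=3$, I would exhibit the explicit resolving set $W=\{m_1,m_2\}$ and verify pairwise distinctness of the six representations by direct calculation. For even $k\geq 4$, I would propose $W=\{m_1,m_{k/2}\}$: the middle representations are $r(m_j|W)=(d_k(j,1)+1,d_k(j,k/2)+1)$, and equality of both coordinates forces $j'\in\{j,\,2-j\}$ from the first and $j'\in\{j,\,k-j\}$ from the second, yielding $j'=j$ since $k\geq 4$. Analogous casework on the short and long arcs between $E_1$ and $E_{k/2}$ rules out collisions among the linkings and between middles and linkings. In each case the bound $dim(\mathcal{C}_{k,3})\geq 2$ is immediate since no single vertex can separate all pairs.

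For odd $k\geq 5$, the upper bound $dim\leq 3$ is by exhibiting $W=\{m_1,m_2,m_{(k+3)/2}\}$ and checking injectivity of the representation map via the same edge-cycle bookkeeping, augmented by the extra coordinate. The lower bound $dim\geq 3$ is the main obstacle. The crucial feature of odd $k$ is that $j\mapsto d_k(j,a)$ attains its maximum $(k-1)/2$ at two consecutive indices $a+(k-1)/2$ and $a+(k+1)/2$, so the three vertices $m_{a+(k-1)/2}$, $v_{a+(k-1)/2}$, $m_{a+(k+1)/2}$ all lie at distance $(k+1)/2$ from $m_a$. For any candidate $W=\{w_1,w_2\}$, I would case-split on whether $w_1$ is a middle or a linking, and on whether $w_2$ falls in the plateau of $w_1$. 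Generically, the identity above forces the linking member of the plateau to have $w_2$-distance equal to the minimum of the two middles', creating a collision. The delicate case is when $w_2$ itself lies in the plateau of $w_1$ (for example $W=\{m_a,m_{a+(k-1)/2}\}$): here the collision instead occurs between a middle on the short arc and a linking spanning the long arc between $E_{w_1}$ and $E_{w_2}$, and must be exhibited by direct computation of the representations.
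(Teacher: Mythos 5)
Your upper-bound witnesses (two degree-one vertices in antipodal hyperedges for even $k$, two degree-one vertices for $k=3$, three degree-one vertices for odd $k$) coincide with the paper's choices up to relabelling, and the cyclic-distance formula $d(u,w)=\min\{d_k(i,a)\}+1$ together with the identity $d(v_j,w)=\min\{d(m_j,w),d(m_{j+1},w)\}$ is a sound and rather cleaner bookkeeping device than the paper's raw coordinate lists. The problem lies in the lower bound for odd $k$.

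The only collision-producing mechanism you describe is: locate the plateau $\{m_p,v_p,m_{p+1}\}$ of $w_1$ and use the min-identity to force $v_p$ to collide with one of its two \emph{adjacent} middles. This works when $w_1$ is a middle (and you correctly flag the delicate sub-case $w_2$ in that plateau), but it does not cover the case in which \emph{both} elements of $W$ are linking vertices. For a linking vertex $v_a$ the function $j\mapsto\min(d_k(j,a),d_k(j,a{+}1))$ attains its maximum at a single index, so there is no three-element plateau; worse, for some two-linking choices of $W$ there is no adjacent-type collision anywhere. Concretely, in $\mathcal{C}_{5,3}$ take $W=\{v_{1,2},v_{3,4}\}$ (the paper's notation for your two linkings). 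One computes $r(m_1)=(1,3)$, $r(m_2)=(1,2)$, $r(m_3)=(2,1)$, $r(m_4)=(3,1)$, $r(m_5)=(2,2)$, $r(v_{2,3})=(1,1)$, $r(v_{4,5})=(2,1)$, $r(v_{5,1})=(1,2)$: every linking vertex is resolved from both of its adjacent middles, and the only coincidences are $r(m_3)=r(v_{4,5})$ and $r(m_2)=r(v_{5,1})$, i.e.\ between a degree-one vertex and a \emph{non-adjacent} linking vertex. Your proposal never looks for collisions of this type, so the two-linking case is not closed by the argument as described. The paper's proof devotes a separate case to exactly this configuration (its case $(1)$, ``$W$ contains both common vertices'') and exhibits such non-adjacent collisions for each relative position of the two common vertices; you would need to add an analogous argument, or replace the plateau step by a global sign/counting argument on the differences $d(m_j,w)-d(m_{j+1},w)$ around the cycle together with a direct treatment of the residual antipodal configurations, to complete the lower bound.
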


\begin{proof}
In $\mathcal{C}_{k,3}$, each $v_j\in E_j$ represents a vertex of
degree one and $v_{j,j+1}\in E_j\cap E_{j+1}$ with
$v_{k,k+1}=v_{k,1}$. Clearly, $dim(\mathcal{C}_{k,3})>1$ for any
$k.$

\indent If $k$ is even, then we take $W=\{v_1, v_{\frac{k}{2}}\}$.\\
For $1< j< \frac{k}{2},$ we have $r(v_j|W)=(j,\frac{k}{2}-j+1)$ and
for $1\leq j < \frac{k}{2},\;r(v_{j,j+1}|W)=(j,\frac{k}{2}-j).$ Now,
if $\frac{k}{2}+1\leq j< k,$ then $r(v_j|W)=(k+2-j,j-\frac{k}{2}+1)$
and $r(v_{j,j+1}|W)=(k+1-j,j-\frac{k}{2}+1)$ with
$r(v_k|W)=(2,\frac{k}{2}+1)$,
$r(v_{\frac{k}{2},\frac{k}{2}+1}|W)=(\frac{k}{2},1)$ and
$r(v_{k,1}|W)=(1,\frac{k}{2}).$ It is easy to see that the
representations of all the vertices with respect to $W$ are
distinct, hence $W$ forms a basis for $\mathcal{C}_{k,3}$ and
$dim(\mathcal{C}_{k,3})=2$.

\indent For the special case when $k=3$, the set $W=\{v_1,v_2\}$
forms a basis for $\mathcal{C}_{3,3}$. Hence
$dim(\mathcal{C}_{3,3})=2.$

\indent If $k>3$ is odd, then we first show that
$dim(\mathcal{C}_{k,3})>2.$ Suppose on contrary that
$dim(\mathcal{C}_{k,3}) = 2$ and let $W$ is a basis of
$\mathcal{C}_{k,3}$. Let us call the vertices $v_{i,i+1}$, $i\in
\{1,2,\ldots, k\}$, of $\mathcal{C}_{k,3}$, the common vertices. We
have the following three possibilities:

$(1)$\ $W$ contains both common vertices. Without loss of
generality, we may assume that one vertex is $v_{1,2}$ and the second
vertex is $v_{j,j+1}$ $(2\leq j\leq k)$. Then $r(v_{j+1}|W) =
r(v_{j+1,j+2}|W)$, for $2\leq j < \frac{k+1}{2}$; $r(v_2|W) =
r(v_{k,1}|W)$, for $j = \frac{k+1}{2}$; $r(v_1|W) = r(v_{2,3}|W)$,
for $j = \frac{k+1}{2}+1$ and $r(v_j|W) = r(v_{j-1,j}|W)$, for
$\frac{k+1}{2}+1 < j\leq k$, a contradiction.

$(2)$\ $W$ contains one common vertex. Without loss of generality,
we may assume that one vertex is $v_{1,2}$ and the second vertex is
$v_{j}$ $(1\leq j\leq k)$. Then $r(v_{j+1}|W) = r(v_{j+1,j+2}|W)$,
for $1\leq j < \frac{k+1}{2}$; $r(v_1|W) = r(v_{k,1}|W)$, for $j =
\frac{k+1}{2}$; $r(v_1|W) = r(v_{2}|W)$, for $j = \frac{k+1}{2}+1$
and $r(v_2|W) = r(v_{2,3}|W)$, for $\frac{k+1}{2}+1 < j\leq k$, a
contradiction.

$(3)$\ $W$ contains no common vertex. Without loss of generality, we
may assume that one vertex is $v_{1}$ and the second vertex is $v_{j}$
$(2\leq j\leq k)$. Then $r(v_{j+1}|W) = r(v_{j+1,j+2}|W)$, for
$2\leq j < \frac{k+1}{2}$; $r(v_{j-1}|W) = r(v_{j+1,j+2}|W)$, for $j
= \frac{k+1}{2}$; $r(v_{j+1}|W) = r(v_{j-2,j-1}|W)$, for $j =
\frac{k+1}{2}+1$ and $r(v_{j-1}|W) = r(v_{j-2,j-1}|W)$, for
$\frac{k+1}{2}+1 < j\leq k$, a contradiction.

\indent Now, we will show that $dim(\mathcal{C}_{k,3})\leq 3$. Take
$W=\{v_1,v_2,v_{\frac{k+1}{2}}\}$. We note that, $r(v_{1,2}|W) =
(1,1,\frac{k-1}{2})$ and
$$\,\,\,\,\,\,\,\,\,\,\,\,\,\,\,\,\ r(v_j|W) = \left\{
            \begin{array}{ll}
            (j,j-1,\frac{k+1}{2}-j+1)        &\,\ \mbox{for}\,\ 2<j<\frac{k+1}{2},\\
            (\frac{k+1}{2},\frac{k+1}{2},2)  &\,\ \mbox{for}\,\ j = \frac{k+1}{2}+1,\\
            (k-j+2, k-j+3,j-\frac{k-1}{2})   &\,\ \mbox{for}\,\ \frac{k+1}{2}+1< j \leq k,
            \end{array}
             \right.
$$

$$ r(v_{j,j+1}|W) = \left\{
            \begin{array}{ll}
            (j,j-1,\frac{k+1}{2}-i)             &\,\ \mbox{for}\,\ 2\leq j <\frac{k+1}{2},\\
            (\frac{k+1}{2},\frac{k-1}{2},1)     &\,\ \mbox{for}\,\ j = \frac{k+1}{2},\\
            (k-j+1, k-j+2, j-\frac{k-1}{2})     &\,\ \mbox{for}\,\ \frac{k+1}{2}< j \leq k.
            \end{array}
             \right.
$$

One can see that all the vertices of $V(\mathcal{C}_{k,3})-
W$ have distinct representations. This implies that
$dim(\mathcal{C}_{k,3})= 3$ when $k > 3$ is odd.
\end{proof}

The {primal graph}, $prim(H)$, of a hypergraph $H$ is a graph with
vertex set $V(H)$ and vertices $x$ and $y$ of $prim(H)$ are adjacent
if and only if $x$ and $y$ are contained in a hyperedge. The {\it
middle graph}, $M(H)$, of $H$ is a subgraph of $prim(H)$ obtained by
deleting loops and parallel edges. Since the adjacencies between the vertices in $prim(H)$ are due to the adjacencies in the hypergraph $H$, so determining the length of a path between two vertices $u$ and $v$ in $prim(H)$ is equivalent to determine the length of a path between the vertices $u$ and $v$ in $H$. This fact yields the following result:

\begin{Theorem}\label{tt7}
Let $H$ be a hypergraph. Then $$dim(H) = dim(prim(H)) = dim(M(H)).$$
\end{Theorem}

%The primal graph of a hypergraph $H$ is a simple graph (without
%loops and parallel edges), which is also the middle graph. But,
The $dual$ of $H = (\{v_1,v_2,\ldots, v_m\},
\{E_1,E_2,\ldots,E_k\})$, denoted by $H^*$, is the hypergraph whose
vertices are $\{E_1,E_2,\ldots, E_k\}$ corresponding to the
hyperedges of $H$ and with hyperedges $V_i = \{E_j\ :\ v_i\in E_j\
\mbox{in}\ H\}$, where $i = 1 ,2,\ldots, m$. In other words, the
dual $H^*$ swaps the vertices and hyperedges of $H$. The primal
graph of the dual $H^*$ of a hypergraph $H$ is not a simple graph,
in this case, the middle graph of $H^*$ is a simple graph. We
discuss the metric dimension of dual hypergraphs separately in the
following result, which also helps us to characterize all the
hypergraphs with metric dimension one.

\begin{Theorem}\label{tt5}
Let $H^*$ be the dual of a hypergraph $H$. Then $$dim(H^*) = dim(M(H^*)).$$
\end{Theorem}

\begin{proof}
By the definition of middle graph, for any two vertices $u$ and $v$
of $H^*$, a path $P$ is a shortest path between the vertices $u$ and
$v$ in $H^*$ if and only if $P$ is a shortest path between $u$ and
$v$ in $M(H^*)$. Thus a set $W\subseteq V(H^*)$ is a minimum
resolving set for $H^*$ if and only if $W$ is a minimum resolving
set for $M(H^*)$.
\end{proof}

The middle graph of $H^*$ is (1)\ a simple path $P_m$ if and only if
$H$ is a hyperpath; (2)\ a simple cycle $C_m$ if and only if $H$ is
a hypercycle. In \cite{rp2}, all the simple connected graphs having
metric dimension one were characterized by proving the result
``$dim(G)$ is one if and only if $G$ is a simple path $P_m$ ($m\geq
1$)''. Now, we characterize all the connected hypergraphs having the
metric dimension 1. In fact, all these hypergraphs are the dual
hypergraphs and have been characterized in the following consequence
of Theorem \ref{tt5}.

\begin{Corollary}\label{c1}
Let $H^*$ be the dual of a hypergraph $H$. Then $dim(H^*)
= 1$ if and only if $H$ is a hyperpath.
\end{Corollary}

In \cite{rp3}, it was shown that the metric dimension of a simple
cycle $C_m$ ($m\geq 3$) is two. % and $dim(T) = \sigma(T) - ex(T)$, where
%$\sigma(T)$ denotes the sum of the terminal degrees of the major
%vertices of $T$ and $ex(T)$ denotes the number of exterior major
%vertices of $T$ \cite{rp2}.
Thus, we have the following corollary:

\begin{Corollary}\label{c2}
Let $H^*$ be the dual of a hypercycle $H$. Then $dim(H^*)
= 2$.
\end{Corollary}

%\begin{Corollary}\label{c3}
%Let $H$ be a $p$-hypertree and $H^*$ be the dual of $H$. Then
%$dim(H^*) = \sigma(M(H^*)) - ex(M(H^*))$.
%\end{Corollary}

%%%--------------------------------------------------------------------
\section{Partition Dimension of Hypergraphs}
Possibly to gain insight into the metric dimension,
Chartrand $et$ $al$. introduced the notion of a resolving partition
and partition dimension \cite{rp1,cha2}. To define the partition
dimension, the distance $d(v,S)$ between a vertex $v$ in $H$ and
$S\subseteq V(H)$ is defined as $\min \limits_{s\in S}d(v,s).$ Let
$\Pi=\{S_1, S_2,\ldots, S_t\}$ be an ordered $t$-partition of $V(H)$
and $v$ be any vertex of $H.$ Then the {\em representation},
$r(v|\Pi)$, of $v$ with respect $\Pi$ is the $t$-tuple
$r(v|\Pi)=\left(d(v,S_1), d(v,S_2),...,d(v,S_t)\right).$ The
partition $\Pi$ is called a {\em resolving partition} for a
hypergraph if $r(v|\Pi)\neq r(u|\Pi)$ for any two distinct vertices
$v,u\in V(H)$. The {\em partition dimension} of a hypergraph $H$ is
the cardinality of a minimum resolving partition, denoted by
$pd(H)$.

From the definition of a resolving partition, it can be observed
that the property of a given partition $\Pi$ of a hypergraph $H$ to
be a resolving partition of $H$ can be verified by investigating the
pairs of vertices in the same class. Indeed, $d(x,S_i)=0$ for every
vertex $x \in S_i$ but $d(x,S_j)\neq 0$ with $j \neq i$. It follows
that $x \in S_i$ and $y \in S_j$ are resolved either by $S_i$ or
$S_j$ for every $i\neq j$.
%Chartrand $et$ $al$. \cite{rp1} gave the relationship between metric
%and partition dimension of a connected graph $G$ by showing that
%$pd(G)\leq dim(G)+1$. Since a hypergraph $H$ is a generalization of
%a graph $G$, we deduce that $pd(H)\leq dim(H)+1$.
From Proposition \ref{pro1}, we have the following lemma:

\begin{Lemma}\label{l4}
Let $\Pi$ be a resolving partition of $V(H)$. If $u,v \in
C(i_1,i_2,...,i_d)$ then $u$ and $v$ belong to distinct classes of
$\Pi$.
\end{Lemma}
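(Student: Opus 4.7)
The plan is to argue by contradiction, exploiting Proposition~\ref{pro1} which says that any two distinct vertices in a common class $C(i_1,\ldots,i_d)$ have identical distances to every other vertex of $H$. So suppose $u,v \in C(i_1,\ldots,i_d)$ and both lie in the same part $S_i$ of $\Pi$; the goal is then to show that $r(u\mid \Pi) = r(v\mid \Pi)$, contradicting the assumption that $\Pi$ resolves $V(H)$.

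First I would handle the coordinate corresponding to $S_i$: since both $u$ and $v$ belong to $S_i$, we have $d(u,S_i) = d(v,S_i) = 0$, so these coordinates trivially agree. Next, for each $j \neq i$, I would note that $S_j$ is disjoint from $S_i$, hence contains neither $u$ nor $v$. Therefore every $s \in S_j$ satisfies $s \in V(H) \setminus \{u,v\}$, and Proposition~\ref{pro1} directly gives $d(u,s) = d(v,s)$ for each such $s$. Taking the minimum over $S_j$, we obtain $d(u,S_j) = d(v,S_j)$.

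Combining these observations across all parts shows that $r(u\mid\Pi)$ and $r(v\mid\Pi)$ agree coordinatewise, contradicting the hypothesis that $\Pi$ is resolving. Hence $u$ and $v$ must belong to distinct parts of $\Pi$.

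There is no real obstacle here; the only small point to be careful about is the case $j = i$, where Proposition~\ref{pro1} does not directly apply (since $u$ and $v$ themselves lie in $S_i$), but this is dispatched immediately because the distance of any vertex to its own part is $0$. Everything else is a straightforward unpacking of the definition of $d(v,S)$ together with pointwise application of Proposition~\ref{pro1}.
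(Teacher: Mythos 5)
Your proof is correct and is exactly the argument the paper intends: the paper gives no written proof, simply asserting the lemma follows from Proposition~\ref{pro1}, and your contradiction argument (the $S_i$-coordinate is $0$ for both, and every other coordinate agrees by pointwise application of Proposition~\ref{pro1} to vertices of $S_j\subseteq V(H)\setminus\{u,v\}$) is the natural filling-in of that claim. No issues.
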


The following result gives the lower bound for the partition
dimension of hypergraphs.

\begin{Proposition}\label{p2}
Let $H$ be a hypergraph with $k$ hyperedges. Then $pd(H)\geq \lambda
+1$, where $\lambda = \max |C(i_1,i_2,\ldots, i_d)|$ in $H$.
\end{Proposition}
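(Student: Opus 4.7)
The plan is a proof by contradiction. Suppose a resolving $\lambda$-partition $\Pi=\{S_1,\ldots,S_\lambda\}$ of $V(H)$ exists, and fix a class $C=C(i_1,\ldots,i_d)$ realizing the maximum, so $|C|=\lambda$. By Lemma~\ref{l4}, the $\lambda$ vertices of $C$ must occupy $\lambda$ pairwise distinct parts of $\Pi$; since there are only $\lambda$ parts, each $S_i$ receives exactly one vertex of $C$. Relabel so that $u_i\in S_i\cap C$ for $i=1,\ldots,\lambda$. This step is just an application of Lemma~\ref{l4} and already yields $pd(H)\ge\lambda$; the real content of the proposition is the extra $+1$.

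Next I would compute the representations $r(u_i|\Pi)$. Since $C\subseteq E_{i_1}$, any two vertices of $C$ share the hyperedge $E_{i_1}$ and are therefore at distance $1$ in $H$. Consequently, for each $j\neq i$, $d(u_i,S_j)\leq d(u_i,u_j)=1$, and the inequality $d(u_i,S_j)\geq 1$ holds because $u_i\notin S_j$. Hence
\[
r(u_i|\Pi)=\bigl(1,1,\ldots,\underbrace{0}_{i\text{-th}},\ldots,1\bigr),
\]
so the $\lambda$ representatives of $C$ use up all $\lambda$ distinct vectors of the form (one $0$, rest $1$'s).

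The main obstacle, and the step I expect to occupy most of the proof, is to produce a second vertex with the same representation as some $u_j$ and thereby contradict the resolving property of $\Pi$. The plan is to locate a vertex $w\in V(H)\setminus C$ and examine its class $S_j$: one shows that $w$ is forced to have $d(w,S_\ell)=1$ for every $\ell\neq j$ (because every $S_\ell$ with $\ell\neq j$ already contains a vertex of $C$, which sits in some hyperedge that $w$ can reach in one step, by exploiting how the membership data of $C$ interacts with that of $w$), and of course $d(w,S_j)=0$. Then $r(w|\Pi)=r(u_j|\Pi)$, contradicting that $\Pi$ is resolving, and forcing $pd(H)\geq\lambda+1$. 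The delicate point is to guarantee that such a $w$ exists and that the adjacency-to-every-other-part condition actually holds; this is where I would need to invoke carefully the way vertices outside $C$ see the hyperedges $E_{i_1},\ldots,E_{i_d}$, and it is the place where a hidden hypothesis on $H$ (such as having at least two hyperedges, so that $V(H)\setminus C\neq\emptyset$) appears to be implicitly in force.
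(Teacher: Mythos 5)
Your setup is exactly the paper's: one vertex of $C$ per part via Lemma~\ref{l4}, and your computation $r(u_i|\Pi)=(1,\ldots,0,\ldots,1)$ is correct. But the step you defer --- producing the colliding vertex $w$ --- is the entire content of the ``$+1$'', and as sketched it does not go through for an arbitrary $w\in V(H)\setminus C$: a vertex lying far from the hyperedges $E_{i_1},\ldots,E_{i_d}$ can have $d(w,S_\ell)\geq 2$ for some $\ell$, so its representation need not have the all-ones-with-one-zero form at all. The missing idea is that $w$ must be chosen \emph{inside a hyperedge containing all of $C$}. The paper takes a hyperedge $E$ with $C\subsetneq E$ and picks $w\in E\setminus C$; then $w$ shares the hyperedge $E$ with every $u_\ell$, so $d(w,S_\ell)=1$ for every $\ell$ other than the index $j$ of the part containing $w$, whence $r(w|\Pi)=r(u_j|\Pi)$ and $\Pi$ is not resolving.

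The existence of such a $w$, i.e.\ the strictness of $C\subsetneq E_{i_1}$, is where the paper's standing hypothesis that $H$ is a connected Sperner hypergraph enters --- this is the ``hidden hypothesis'' you correctly sensed but did not identify. If $d\geq 2$ then $C\subseteq E_{i_1}\cap E_{i_2}$ and the Sperner condition gives $E_{i_1}\not\subseteq E_{i_2}$, so $E_{i_1}\setminus C\neq\emptyset$. If $d=1$ then $C(i_1)$ consists of the degree-one vertices of $E_{i_1}$, and connectedness together with $k\geq 2$ forces $E_{i_1}$ to contain a vertex of degree at least two, again giving $E_{i_1}\setminus C\neq\emptyset$. (For $k=1$ the bound indeed fails, so some such restriction is in force, as you suspected.) Without pinning $w$ to a hyperedge containing all of $C$, the adjacency-to-every-other-part condition you need cannot be established, so the proposal has a genuine gap at its decisive step even though the overall strategy is the right one.
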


\begin{proof}
Since $\lambda = \max |C(i_1,i_2,\ldots, i_d)|$ in $H$, by Lemma
\ref{l4}, we have at least $\lambda$ disjoint classes $S_1,
S_2,\ldots, S_{\lambda}$ of $V(H)$. Since $H$ is Sperner so there
exists an edge $E$ of $H$ such that $C(i_1,i_2,\ldots, i_d)\subset
E$. Now, if $\Pi
 = \{S_1,\ldots,S_{\lambda}\}$ is a minimum resolving partition of $V(H)$ then there exist two vertices $u$ and $v$ in
 $E$ such that $u,v \in S_i$ (say) with $r(u|\Pi) = (1,\ldots,0,\ldots,1) = r(v|\Pi)$, where $0$ is at the
$i$th place, a contradiction. Thus, $pd(H)\geq \lambda +1$.
\end{proof}

The lower bound given in Proposition \ref{p2} is sharp for an
$n$-uniform linear hyperpath.

%In \cite{cha2}, it was shown that the partition dimension of a
%2-uniform linear hypercycle (simple cycle) is three. The following
%result shows that the partition dimension of an $n$-uniform linear
%hypercycle is $n+1$ except when the number of edges is even in
%3-uniform linear hypercycle.
A 2-uniform hypercycle $\mathcal{C}_{k,2}$ is a simple connected cycle on
$m$ vertices and it was shown that the partition dimension of a
simple connected cycle is 3 \cite{cha2}, so $pd(\mathcal{C}_{k,2}) = 3$. In the next result, we investigate the partition dimension of 3-uniform hypercycle $\mathcal{C}_{k,3},\ k\geq 3$.

\begin{Theorem}\label{tt8}
Let $\mathcal{C}_{k,3}$ be a $3$-uniform linear
hypercycle with $k\geq 3$ hyperedges. Then, $pd(\mathcal{C}_{k,3}) = 3$.
%and
%$$ pd\left(\mathcal{C}_{k,3}\right) = \left\{
%            \begin{array}{ll}
%             3,          \,\,\,\     & \mbox{if}\,\,\,\ k \mbox{ is even},\\
%             4,          \,\,\,\     & \mbox{if}\,\,\,\ k \mbox{ is odd}.\\
%            \end{array}
%             \right.
%$$
\end{Theorem}

\begin{proof}
For all $k\geq 3$, we denote the vertices of $\mathcal{C}_{k,3}$ by
$v_i^j$, where $j$ ($1 \leq j\leq k$) represents the hyperedge
number of $\mathcal{C}_{k,3}$ and $i$ ($1\leq i\leq 3$) represents
the vertex number of the $j$th hyperedge. Each $v_2^j\in E_j$ represents the vertex of degree one and $v_3^{j} =
v_1^{j+1} \in E_j\cap E_{j+1}$ represents a vertex of degree 2 with
$v_3^{k} = v_1^{1}$.

If we put all the vertices of $\mathcal{C}_{k,3}$ into two classes
$S_1$ and $S_2$, then they do not form a resolving partition $\Pi$
of $V(\mathcal{C}_{k,3})$, because one can easily check that there
exist two vertices $u,v$ of $\mathcal{C}_{k,3}$ in a class such that
$r(u|\Pi) = (0,1)= r(v|\Pi)$. Thus $pd(\mathcal{C}_{k,3}) \geq 3$.
On the other hand, $pd(\mathcal{C}_{k,3}) \leq 3$, because we have a
resolving partition of cardinality 3 for $pd(\mathcal{C}_{k,3})$ in
each of the following case:

\noindent For $k\equiv 0$\ (mod 6), we have a resolving partition for $pd(\mathcal{C}_{k,3})$ as
$$\Pi= \{\{v_{1}^{1},\ldots,v_{3}^{\frac{1}{3}k}\}, \{v_{2}^{\frac{1}{3}k+1},\ldots,v_{3}^{\frac{2}{3}k}\},\{v_{2}^{\frac{2}{3}k+1},\ldots,v_{2}^{k}\}\}.$$

\noindent For $k\equiv 1,4$\ (mod 6), we have a resolving partition for $pd(\mathcal{C}_{k,3})$ as
$$\Pi= \{\{v_{1}^{1},\ldots,v_{2}^{\frac{1}{3}(k+2)}\}, \{v_{3}^{\frac{1}{3}(k+2)},\ldots,v_{3}^{\frac{2}{3}(k+2)-1}\},\{v_{2}^{\frac{2}{3}(k+2)},\ldots,v_{2}^{k}\}\}.$$

\noindent For $k\equiv 2$\ (mod 6), we have a resolving partition for $pd(\mathcal{C}_{k,3})$ as
$$\Pi= \{\{v_{1}^{1},\ldots,v_{2}^{\frac{1}{3}(k+1)}\}, \{v_{3}^{\frac{1}{3}(k+1)},\ldots,v_{3}^{\frac{2}{3}(k+1)-1}\},\{v_{2}^{\frac{2}{3}(k+1)},\ldots,v_{2}^{k}\}\}.$$

\noindent For $k\equiv 3$\ (mod 6), we have a resolving partition for $pd(\mathcal{C}_{k,3})$ as
$$\Pi= \{\{v_{2}^{1},\ldots,v_{2}^{\frac{1}{3}k+1}\}, \{v_{3}^{\frac{1}{3}k+1},\ldots,v_{2}^{\frac{2}{3}k+1}\},\{v_{3}^{\frac{2}{3}k+1},\ldots,v_{3}^{k}\}\}.$$

\noindent For $k\equiv 5$\ (mod 6), we have a resolving partition for $pd(\mathcal{C}_{k,3})$ as
$$\Pi= \{\{v_{2}^{1},\ldots,v_{3}^{\frac{1}{3}(k+1)}\}, \{v_{2}^{\frac{1}{3}(k+1)+1},\ldots,v_{2}^{\frac{2}{3}(k+1)}\},\{v_{3}^{\frac{2}{3}(k+1)},\ldots,v_{3}^{k}\}\}.$$

\end{proof}

In \cite{rp1}, it was shown that a 2-uniform linear hyperpath
(simple path) has partition dimension 2. Now, we generalize this
result by proving that if $H$ is an $n$-uniform linear hyperpath
$(n\geq 2)$, then the partition dimension of $H$ is $n$.

\begin{Theorem}\label{tt7}
For $n \geq 2$, let $H$ be an $n$-uniform linear hypergraph with $k$
hyperedges. Then, for a 3-uniform linear hyperpath $H$ with even
hyperedges, $pd(H) = 3$ and for all other values of $n$, $pd(H) = n$
if and only if $H$ is a hyperpath.
\end{Theorem}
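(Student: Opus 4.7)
Let $H$ be the hyperpath with canonical labelling $E_j=\{v_1^j,\ldots,v_n^j\}$, $v_n^j=v_1^{j+1}$ for $1\le j<k$. The pendant class $C(1)=\{v_1^1,\ldots,v_{n-1}^1\}$ has cardinality $n-1$, so Proposition~\ref{p2} yields $pd(H)\ge n$. For the matching upper bound I will verify that the partition
$$S_1=\{v_1^1\},\qquad S_i=\{v_i^j:1\le j\le k\}\text{ for }2\le i\le n-1,\qquad S_n=\{v_n^j:1\le j\le k\}$$
resolves $V(H)$. The key observation is that $d(v,S_1)$ records the distance of $v$ from the endpoint $v_1^1$ (and hence indexes the hyperedge containing $v$), while $d(v,S_i)\in\{0,1\}$ for the remaining classes. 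Consequently each representation takes the form $(d(v,v_1^1),1,\ldots,0,\ldots,1)$, where the unique zero locates the class; since distinct vertices of the same class automatically lie in distinct hyperedges along the hyperpath, the first coordinate then discriminates them. Hence $pd(H)\le n$, which in particular covers the $3$-uniform hyperpath with even $k$.

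\textbf{Converse direction.} Assume $pd(H)=n$ and exclude the regime $n=3$, $k$ even. I show $H$ must be a hyperpath by eliminating the two structural alternatives available to a non-hyperpath connected Sperner $n$-uniform linear hypergraph: (i) $H$ contains a hypercycle as a subhypergraph, or (ii) $H$ is a hypertree in which some hyperedge meets three or more other hyperedges. In case~(i), when $H=\mathcal{C}_{k,n}$ itself Theorem~\ref{tt8} gives $pd(H)\ge n+1$ outside the excluded case; when $H$ strictly contains a hypercycle $\mathcal{C}$, the cyclic-symmetry obstruction from the proof of Theorem~\ref{tt8} still applies, since a hyperedge of $\mathcal{C}$ forces its $n$ vertices into $n$ classes and traversing $\mathcal{C}$ one edge further produces two vertices with identical representations unless an $(n+1)$-st class is introduced. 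In case~(ii), fix a hyperedge $E_0$ meeting three distinct hyperedges and extend each intersection to its terminating pendant hyperedge $P^{(s)}$ with joint $y_s$, for $s=1,2,3$. Lemma~\ref{l4} forces the $n-1$ degree-one vertices of each $P^{(s)}$ into $n-1$ distinct classes. A pigeonhole on the three joints among the $n$ classes, combined with the Proposition~\ref{pro1}-type symmetry among the degree-one vertices inside each pendant, produces two degree-one vertices in different pendants sharing a class and also sharing every external distance, hence identical representations, contradicting resolution.

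\textbf{Main obstacle.} The delicate point is the collision argument in case~(ii): after pigeonholing two joint vertices into a common class, the third branch could in principle supply a class separating them, so the proof must select the actual colliding pair \emph{inside} the pendants rather than the joints themselves. I expect the cleanest realisation to replace each offending joint by a degree-one neighbour in its pendant chosen so that Proposition~\ref{pro1} forces equal external distances, and then exploit the fact that the three branches jointly exhaust the $n$-class budget through their pendants' degree-one vertices, leaving no class free to separate the pair. The exclusion of $n=3$, $k$ even is essential precisely here: the $3$-uniform hypercycle with $k$ even realises $pd=3=n$ without being a hyperpath, witnessing that the analogous reasoning genuinely breaks down in that one exceptional regime.
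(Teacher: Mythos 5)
Your forward direction is sound and essentially the paper's: an explicit $n$-class partition in which one class pins down the hyperedge index via distance to an end vertex while the remaining classes record positions inside a hyperedge, together with the lower bound $pd(H)\ge\lambda+1=n$ from Proposition~\ref{p2} applied to the pendant class $C(1)$. (The paper uses the slightly different partition with $S_i=\{v_i^j: 1\le j\le k\}$ for $i\le n-1$ and $S_n=\{v_n^k\}$, but your variant resolves for the same reason.) Your converse also follows the paper's skeleton: split a non-hyperpath into the case where $H$ contains a hypercycle (deferred to the argument of Theorem~\ref{tt8}, exactly as the paper does) and the case where $H$ is a hypertree with a branching hyperedge, to be killed by a collision among pendant hyperedges.

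The genuine gap is the hypertree case, and you have flagged it yourself: the contradiction is never actually produced. The pigeonhole on the degree-one vertices of the pendants $P^{(1)},P^{(2)},P^{(3)}$ gives you, for two pendants $P^{(s)},P^{(t)}$, degree-one vertices $x,y$ in a common class $c$ whose coordinates are $0$ at $c$ and $1$ at every class meeting both pendants; but each pendant's $n-1$ degree-one vertices miss exactly one class $a_s$, and the remaining coordinate of $x$ equals $d(P^{(s)},S_{a_s})$, which depends on where the joint vertex of $P^{(s)}$ and its neighbours land and need not equal the corresponding quantity for $P^{(t)}$ (nor need $a_s=a_t$). Ruling this out requires a further case analysis on the placement of the joints and on these residual distances --- this is exactly the step where the paper's proof does its work with the conditions $|E_p\cap(E_i\cap E_j)|=1$ versus $|E_p\cap E_i|=1$, and it is the step you replace with ``I expect the cleanest realisation to be\ldots''. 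A plan for a contradiction is not a contradiction; as written, the converse is only proved modulo this unexecuted case analysis. (A smaller point in the same direction: the claim that a non-hyperpath hypercycle-free linear hypergraph must have a hyperedge meeting three others deserves a sentence --- the intersection graph is a tree, and a tree that is not a path has a vertex of degree at least three.)
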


\begin{proof} Let $H$ be an $n$-uniform linear hyperpath. Then it
is a routine exercise to verify that a partition $\Pi =
\{S_1,S_2,\ldots, S_n\}$ of $V(H)$, where each $S_i,\ 1\leq i\leq
n-1$, contains the $i$th vertex of every hyperedge of $H$ and $S_n$
contains the $n$th vertex of the $k$th hyperedge, is a minimum
resolving partition.

Conversely, suppose that $\Pi = \{S_1,S_2,\ldots,S_n\}$ be a minimum
resolving partition of $V(H)$ and $H$ is an $n$-uniform linear
hypergraph. For $n = 2$, $H$ is a $2$-uniform linear hyperpath since
the partition dimension of a graph is 2 if and only if the graph is
a simple path (2-uniform linear hyperpath) \cite{rp1}. For $n = 3$,
$k$, odd and for all $n\geq 4$, if $H$ is not a hyperpath then
either $H$ contains a hypercycle or $H$ is a hypertree. Suppose that
$H$ contains a hypercycle, then by using the similar arguments as
given in the proof of Theorem \ref{tt8}, we can see that $pd(H)\geq
n+1$, a contradiction. Now, suppose that $H$ is a hypertree.
Consider a path $P:v,E_1,w_1,E_2,w_2,...,E_{l-1}, w_{l-1},E_l,u$
between two diametral vertices $v$ and $u$ in $H$. Then $P$ contains
either a pendant hyperedge, say $E_p$, or a branch with joint
$E_{p_1}$ (say), or both a pendant hyperedge and a branch. In the
first case,
%with three
%pendant hyperedges, say $E_{p_1}, E_{p_2}$ and $E_{p_3}$(Same
%argument can be extended to any number of pendant hyperedges). We
%suppose that $E_{p_1} = E_1$ and $E_{p_2} = E_k$.
if $|E_{p}\cap (E_i\cap E_j)| = 1$ ($i\neq j$), then there exist two
vertices $x,y$ in $H$, either $x\in E_{p}$ and $y\in E_i$ or $E_j$,
or $x\in E_{i}$ and $y\in E_j$, such that $x,y \in S_t$ (say) and
have $r(x|\Pi) = (1,\ldots,0,\ldots,1) = r(y|\Pi)$, where $0$ is at
the $t$th place. If $|E_{p}\cap E_i| = 1$ for all $i\neq 1, l$, then
there are two vertices $x\in E_{p}$ and $y\in E_i$ such that $x,y
\in S_j$ (say) and have $r(x|\Pi) = (1,\ldots,0,\ldots,1) =
r(y|\Pi)$, where $0$ is at the $j$th place, a contradiction to the
fact that $\Pi$ is resolving partition. Similarly, in the second and
third case, we can see that a partition of cardinality $n$ is not a
resolving partition of $V(H)$. Thus $H$ is an $n$-uniform linear
hyperpath.
\end{proof}

The {\it rank} of a hypergraph $H$ is the maximum number of
vertices in a hyperedge. One might think that the partition
dimension of $H$ is always greater than or equal to the rank of $H$.
This is true for an $n$-uniform linear hyperpath and an $n$-uniform
linear hypercycle $\mathcal{C}_{k,3}$. But, in general, it is not true as shown in the following example:

\begin{Example}\label{e2}
Let $H$ be a hypergraph with vertex set $V(H)=\{v_i:\ 1\leq i\leq
11\}$ and edge set $E(H)=\{E_1, E_2\}$, where $E_1=\{v_i;\ 1\leq i\leq
7\}$ and $E_2=\{v_i;\ 6\leq i\leq 11\}$. Clearly, $rank(H)= 7$,
$\lambda = 5$ and $\Pi = \left\{S_i = \{v_i,v_{i+5}\};\ 1\leq i\leq
5, S_6 = \{v_{11}\}\right\}$ is a minimum resolving partition of
V(H). This implies that $pd(H)  = 6 \neq rank(H)$.
\end{Example}

Likewise the results on the metric dimension of the primal and dual
graph of a hypergraph, we have the following two results on the
partition dimension of the primal and dual graph of a hypergraph,
respectively:

\begin{Theorem}\label{ttt7}
Let $H$ be a hypergraph. Then $pd(H) = pd(prim(H))$.
\end{Theorem}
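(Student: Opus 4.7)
The plan is to reuse, almost verbatim, the distance-preservation observation that was already invoked to prove $dim(H)=dim(prim(H))$. Since $V(H)=V(prim(H))$ and any shortest path between two vertices $x,y$ in $H$ is also a shortest path between $x$ and $y$ in $prim(H)$ (and conversely), we get the pointwise equality $d_H(x,y)=d_{prim(H)}(x,y)$ for every pair of vertices. This is the only genuine geometric input required.

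The first step in the argument is to promote this pointwise equality to the vertex-to-set distances that define partition representations: for any $v\in V(H)$ and $S\subseteq V(H)$,
$$d_H(v,S)=\min_{s\in S}d_H(v,s)=\min_{s\in S}d_{prim(H)}(v,s)=d_{prim(H)}(v,S).$$
This is immediate once the underlying distances agree.

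The second step is to observe that an ordered $t$-partition $\Pi=\{S_1,\ldots,S_t\}$ of $V(H)$ is at the same time an ordered $t$-partition of $V(prim(H))$, and by the previous step the representation $r(v|\Pi)$ computed in $H$ equals the representation $r(v|\Pi)$ computed in $prim(H)$ coordinate-by-coordinate. Hence $\Pi$ resolves $V(H)$ in $H$ if and only if it resolves $V(prim(H))$ in $prim(H)$. Taking the minimum $t$ on both sides gives $pd(H)=pd(prim(H))$.

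There is essentially no obstacle here; the substantive work was done once the distance-preservation observation was recorded earlier in the paper, and the proof is a direct analogue of Theorem \ref{tt7}'s argument for the metric dimension, with representations in the distance-to-subset sense replacing representations in the distance-to-vertex sense.
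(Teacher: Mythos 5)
Your proposal is correct and follows exactly the route the paper intends: the paper states this theorem without proof, remarking only that it is the ``likewise'' analogue of Theorem~\ref{tt7}, whose justification is the same distance-preservation observation ($d_H(x,y)=d_{prim(H)}(x,y)$ for all $x,y$) that you promote to vertex-to-set distances. Your write-up simply makes explicit the two routine steps the paper leaves implicit, so there is nothing to object to.
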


\begin{Theorem}\label{tt6}
Let $H^*$ be the dual of a hypergraph $H$ and $M(H^*)$ be the middle
graph of $H^*$. Then $pd(H^*) = pd(M(H^*))$.
\end{Theorem}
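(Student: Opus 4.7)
The plan is to mirror the proof of Theorem \ref{tt5} and of its primal analogue Theorem \ref{ttt7}, by showing that distances between distinct vertices are preserved when we pass from $H^*$ to its middle graph $M(H^*)$. Once distances agree, representations with respect to any partition agree, and the equality of the partition dimensions follows.

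First, I would record the distance-preservation step. The vertex sets satisfy $V(H^*)=V(M(H^*))$, and by the definition of the middle graph $M(H^*)$ is obtained from $\mathrm{prim}(H^*)$ by deleting loops and parallel edges. Loops contribute nothing to distances between distinct vertices, and parallel edges can always be replaced by a single edge of the same length, so a sequence $u=x_0,x_1,\dots,x_\ell=v$ is a shortest path between distinct vertices in $H^*$ if and only if it is a shortest path in $M(H^*)$. Hence $d_{H^*}(u,v)=d_{M(H^*)}(u,v)$ for every pair $u,v\in V(H^*)$. This is exactly the step used at the beginning of the proof of Theorem \ref{tt5}, just transported from resolving sets to resolving partitions.

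Next, I would use this to compare representations. Since the set-distance $d(v,S)=\min_{s\in S}d(v,s)$ depends only on the vertex distance function, for any ordered partition $\Pi=\{S_1,\dots,S_t\}$ of $V(H^*)=V(M(H^*))$ and any vertex $v$, the representation $r(v|\Pi)=\bigl(d(v,S_1),\dots,d(v,S_t)\bigr)$ is identical whether computed in $H^*$ or in $M(H^*)$. Consequently, $\Pi$ resolves every pair of distinct vertices in $H^*$ if and only if it resolves every pair in $M(H^*)$, i.e.\ $\Pi$ is a resolving partition of $H^*$ iff it is a resolving partition of $M(H^*)$. Taking the minimum cardinality over all such $\Pi$ on each side gives $pd(H^*)=pd(M(H^*))$.

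There is really no hard step; the main subtlety is noting why one must route the comparison through the middle graph rather than the primal graph of $H^*$. As remarked before Theorem \ref{tt5}, $\mathrm{prim}(H^*)$ is in general not simple, and while the corresponding statement $pd(H)=pd(\mathrm{prim}(H))$ of Theorem \ref{ttt7} is phrased for hypergraphs whose primal is a simple graph, the distance-preservation argument still applies to the primal after loops and parallel edges are deleted, yielding the clean statement $pd(H^*)=pd(M(H^*))$ as desired.
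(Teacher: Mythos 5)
Your proposal is correct and follows exactly the route the paper intends: it transports the distance-preservation argument from the proof of Theorem \ref{tt5} (shortest paths in $H^*$ correspond to shortest paths in $M(H^*)$, so all vertex distances, hence all set-distances and partition representations, coincide) from resolving sets to resolving partitions, which is precisely what the paper's ``likewise'' before Theorems \ref{ttt7} and \ref{tt6} indicates. No gaps.
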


Since, it was shown that the simple paths $P_m$ are the only graphs
with $pd(P_m) = 2$ \cite{rp1} and the partition dimension of the
simple cycles $C_m$ is 3, so, by Theorem \ref{tt6}, we have the
following corollaries:

\begin{Corollary}\label{c4}
Let $H^*$ be the dual of a hypergraph $H$. Then $pd(H^*)
= 2$ if and only if $H$ is a hyperpath.
\end{Corollary}

\begin{Corollary}\label{c5}
Let $H^*$ be the dual of a hypercycle $H$. Then $pd(H^*) = 3$.
\end{Corollary}

%From the Corollaries \ref{c1}, \ref{c4} and the Corollaries
%\ref{c2}, \ref{c5}, we note that $pd(H^*) \leq dim(H^*)+1$. But the
%relation, $pd(H)\leq dim(H)+1$ between the metric dimension and the
%partition dimension of any hypergraph $H$, is not true, in general.
%Because, There exists  a family of hypergraphs $H$ such that $pd(H)>
%dim(H)+1$, as shown in the following example:

%\begin{Example}\label{e2}
%Consider a hypergraph $H$ with even $k\ (\geq 4)$ edges $E_1 =
%E_{k+1}, E_2,\ldots,\\ E_k$, where $|E_i| = 3,\ |E_i\cup E_{i-1}| =
%1$ for all $i = 1, 2, \ldots, k$ and $|E_i\cup E_j|= \emptyset$ if
%$|i-i| \neq 1$. The vertex set of $H$ is $V(H) = \{v_i, v_{i,i+1}:\
%1\leq i\leq k\}$, where $v_i\in E_i$ is a vertex of degree one,
%$v_{i,i+1}\in E_i\cap E_{i+1}$ and $v_{k,k+1} = v_{k,1}$. Then
%$dim(H) = 2$ (see Theorem \ref{t3}) and $pd(H) = 4$. Because, one
%can easily check that a partition $\Pi$ with four classes $S_1 =
%\{v_i,v_{k,1}:\ 2\leq i\leq k-1\}$, $S_2 = \{v_{i,i+1},v_{1}:\ 2\leq
%i\leq k-1\}$, $S_3 = \{v_{1,2}\}$ and $S_4 = \{v_{k}\}$, is a
%minimum resolving partition of $V(H)$.
%\end{Example}

%Thus, we leave to the reader the following open problems:\\

%\noindent {\bf Open Problem 3.8.}\ {\it For every pair of integers
%$a,b$ with $a\leq b+1$, is there exists a connected hypergraph $H$
%such that $pd(H) = a$ and $dim(H) = b$?}


\begin{thebibliography}{9}
\bibitem{ahmad}
S. Ahmad, M. A. Chaudhry, I. Javaid, M. Salman, On the metric dimension of generalized Petersen graphs, {\em Quaestiones Mathematicae} {\bf 36}(2013), 421--435.
\bibitem{2}
J. Caceres, C. Hernando, M. Mora, I. M. Pelayo, M. L. Puertas, C. Seara, D. R. Wood, On the metric dimension of cartesian product
of graphs, {\em SIAM J. of Disc. Math.} {\bf 21}(2)(2007), 423--441.
\bibitem{rp1}
G. Chartrand, E. Salehi, P. Zhang, The partition dimension of a
graph, {\em Aequationes Math.} {\bf 59}(2000), 45--54.
\bibitem{cha2}
G. Chartrand, E. Salehi, P. Zhang, On the partition dimension of a
graph, {\em Congr. Numer.} {\bf 130}(1998), 157--168.
\bibitem{rp2}
G. Chartrand, L. Eroh, M. A. Johnson, O. R. Oellermann, Resolvability in graphs and the metric dimension of a graph, {\em Disc.
Appl. Math.} {\em 105}(2000), 99--113.
%\bibitem{4}
%G. Chartrand, C. Poisson, P. Zhang, Resolvability and the upper
%dimension of graph, {\em Comput. Math. Appl.}, 39(2000) 19-28.
%\bibitem{chappel}
%G. Chappell, J. Gimbel, C. Hartman, Bounds on the metric and
%partition dimensions of a graph. Preprint.
\bibitem{ftr2}
M. A. Chaudhry, I. Javaid, M. Salman, Fault-Tolerant metric and
partition dimension of graphs, {\em Util. Math.} {\bf 83}(2010), 187--199.
%\bibitem{galdi}
%G. Di Crescenzo, C. Galdi, Secret sharing and hypergraph
%decomposition, {\em DIMACS Technical Report}, (2001) 2001-2035.
\bibitem{5}
M. R. Garey, D. S. Johnson, Computers and Intractability: A Guide to
the Theory of NP-Completeness, {\em Freeman, W H., San Francisco}, 1979.
\bibitem{rp3}
F. Harary, R. A. Melter, On the metric dimension of a graph, {\em Ars
Combin.} {\bf 2}(1976), 191--195.
\bibitem{7}
C. Hernando, M. Mora, I. M. Pelayo, C. Seara, D. R. Wood, Extremal
Graph Theory for metric dimension and diameter, {\em Elect. Notes in
Disc. Math.} {\bf 29}(2007), 339--343.
\bibitem{ftr1}
I. Javaid, M. Salman, M. A. Chaudhry, S. Shokat, Fault-Tolerance
in Resolvability, {\em Util. Math.} {\bf 80}(2009), 263--275.
\bibitem{jav}
I. Javaid, M. T. Rahim, K. Ali, Families of regular graphs with
constant metric dimension, {\em Util. Math.} {\bf 75}(2008), 21--33.
\bibitem{8}
S. Khuller, B. Raghavachari, A. Rosenfeld, Landmarks in graphs,
{\em Disc. Appl. Math.} {\bf 70} (1996), 217--229.
%\bibitem{Manr} M. Manrique and S. Arumugam, Vertex and Edge Dimension of Hypergraphs,
%{\em Graphs and Combinatorics} {\bf 31} (2015), 183--200.
\bibitem{salman}
M. Salman, I. Javaid, M. A. Chaudhry, Resolvability in circulant
graphs, {\em Acta. Math. Sinica, Englich Series.} {\bf 28}(9)(2012), 1851--1864.
\bibitem{slater}
P. J. Slater, Leaves of trees, {\em Congr. Numer.} {\bf 14}(1975), 549--559.
%\bibitem{tom}
%I. Tomescu, S. A. Bokhary, Series-parallel chromatic hypergraphs,
%{\em Disc. Appl. Math.} 158(2010) 198-203.
\end{thebibliography}
\end{document}